\lstdefinelanguage{Magma}
{
keywords={for,end,if,then,else,elif,while,function,return,cat,&,and,or },
morekeywords={Seqset,Setseq,Polytope,AutomorphismGroup,RowSequence,IdentifyGroup,
	      Subgroups,PermutationMatrix,Generators,MatrixGroup,Transpose},
sensitive=false,
morecomment=[l]{//},
morecomment=[s]{/*}{*/},
morestring=[b]",
}
\newtheorem{thm}{Theorem}
\newtheorem*{thm*}{Theorem}
\newtheorem{lem}[thm]{Lemma}
\newtheorem{prop}[thm]{Proposition}
\newtheorem{conj}[thm]{Conjecture}
\theoremstyle{definition}
\theoremstyle{remark}
\newtheorem{remark}[thm]{Remark}
\DeclareMathOperator{\Pic}{Pic}
\DeclareMathOperator{\PGL}{PGL}
\renewcommand{\H}{H}
\newcommand{\h}{h}
\newcommand{\N}{\mathbb{N}}
\newcommand{\PP}{\mathbb{P}}
\newcommand{\PS}{\mathbb{S}}
\newcommand{\OO}{\mathcal{O}}
\newcommand{\I}{\mathcal{I}}
\newcommand{\M}{\mathcal{M}}
\newcommand{\LL}{\mathcal{L}}
\newcommand{\CICYu}[2]{
\FPeval{\res}{clip(#1-1)}
\ifthenelse{\equal{\res}{1}}{\mkern-9mu\bigstar\mkern-9mu}{}
\left(\begin{array}{c|c}
\PP^{\res} & {#1} \\ 	\FPeval{\res}{clip(#2-1)}
\PP^{\res} & {#2}
\end{array}\right)
}
\newcommand{\CICYd}[4]{
\FPeval{\res}{clip(#1+#2-1)}
\ifthenelse{\equal{\res}{1}}{\mkern-9mu\bigstar\mkern-9mu}{}
\left(\begin{array}{c|cc}
\PP^{\res} & {#1} & {#2} \\ 	\FPeval{\res}{clip(#3+#4-1)}
\PP^{\res} & {#3} & {#4}
\end{array}\right)}
\newcommand{\CICYt}[6]{
\FPeval{\res}{clip(#1+#2+#3-1)}
\ifthenelse{\equal{\res}{1}}{\mkern-9mu\bigstar\mkern-9mu}{}
\left(\begin{array}{c|ccc}
\PP^{\res} & {#1} & {#2} & {#3} \\ 	\FPeval{\res}{clip(#4+#5+#6-1)}
\PP^{\res} & {#4} & {#5} & {#6}
\end{array}\right)}
\newcommand{\CICYq}[8]{
\FPeval{\res}{clip(#1+#2+#3+#4-1)}
\ifthenelse{\equal{\res}{1}}{\mkern-9mu\bigstar\mkern-9mu}{}
\left(\begin{array}{c|cccc}
\PP^{\res} & {#1} & {#2} & {#3} & {#4} \\ 	\FPeval{\res}{clip(#5+#6+#7+#8-1)}
\PP^{\res} & {#5} & {#6} & {#7} & {#8}
\end{array}\right)}
\begin{document}

\title{Rational curves in CICY's in products of two projective spaces}
\author{Filippo F. Favale}
\date{\today}
\address[Filippo F. Favale]{Department of Mathematics, University of Trento, via Sommarive 14,
I-38123 Trento, Italy}
\email{filippo.favale@unitn.it}

\subjclass[2010]{14J32}

\begin{abstract}
Let $X$ be the product of two projective spaces and consider the general CICY threefold $Y$ in $X$ with configuration matrix $A$. We prove the finiteness part of the analogue of the Clemens' conjecture for such a CICY in low bidegrees. More precisely, we prove that the number of smooth rational curves on $Y$ with low bidegree and with nondegenerate birational projection is at most finite (even in cases in which positive dimensional families of degenerate rational curves are known).
\end{abstract}
\maketitle

\section*{Introduction}

The Clemens' conjecture (\cite{Cle84}) states that the generic quintic threefold contains a finite number of smooth rational curves of degree $d$. One of the first successful approach to the proof of this conjecture was done by S. Katz, who proved a stronger version of the conjecture for degree $d\leq 7$, namely
\begin{conj}[\cite{Kat86}]
The scheme of smooth rational curves of degree $d$ on the general quintic threefold is not empty, reduced and finite.
\end{conj}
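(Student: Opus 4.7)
The natural strategy is via an incidence variety. Let $H_d$ denote the open locus in the Hilbert scheme parameterising smooth rational curves of degree $d$ in $\PP^4$, and let $V=\PP(\H^0(\PP^4,\OO_{\PP^4}(5)))$ be the $125$-dimensional linear system of quintic threefolds. Form the incidence variety
\[
\Sigma_d \;=\; \{(C,Y)\in H_d\times V : C\subset Y\},
\]
with its two projections $\pi_1:\Sigma_d\to H_d$ and $\pi_2:\Sigma_d\to V$. The conjecture translates into the three statements: $\pi_2$ is dominant, generically finite, and has reduced general fibre.

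First, I would match dimensions. For $[C]\in H_d$ with $\h^1(\I_{C/\PP^4}(5))=0$ (to be verified on each relevant component of $H_d$), the fibre $\pi_1^{-1}([C])=|\I_{C/\PP^4}(5)|$ has dimension $124-5d$, while the components of $H_d$ containing honest smooth rational embeddings have dimension $5d+1$; this gives $\dim\Sigma_d=125=\dim V$. Dominance and nonemptiness would follow from exhibiting a single pair $(C_0,Y_0)$ with the correct deformation-theoretic properties: the vanishing $\h^1(C_0,N_{C_0/Y_0})=0$, controlled through the Grothendieck splitting $N_{C_0/\PP^4}=\bigoplus_i \OO_{\PP^1}(a_i)$ and the conormal sequence
\[
0\to N_{C_0/Y_0}\to N_{C_0/\PP^4}\to \OO_{C_0}(5)\to 0,
\]
ensures that $C_0$ deforms together with $Y_0$ as the quintic varies in $V$.

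The heart of the argument is finiteness: one must rule out components of $\Sigma_d$ that dominate $V$ with positive-dimensional generic fibre. Following Katz's strategy for small $d$, I would stratify $H_d$ by the projective type of $C$ (curves contained in a hyperplane, in a $\PP^3$, curves with an unexpected multi-secant or multiple point, etc.) and, on each stratum, compute the dimension of the induced sub-incidence variety, verifying it is at most $125$ with equality only when $\pi_2$ restricts to a generically finite map on that stratum. The same cohomological analysis of $N_{C/Y}$ yields reducedness, since $\h^0(N_{C/Y})=0$ at the generic fibre point forces $C$ to be a reduced isolated component of its fibre. The main obstacle is the \emph{uniform} treatment of all strata: exotic components of $H_d$ (multiple-cover loci, non-reduced components, pathological degenerations) must be excluded by hand, and the explosion of such loci for large $d$ is precisely what keeps Clemens' conjecture open in general, while making Katz's bound $d\le 7$ tractable through the explicit projective geometry of low-degree rational curves in $\PP^4$.
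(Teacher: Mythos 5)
This statement is a \emph{conjecture} quoted from \cite{Kat86}; the paper you are reading does not prove it and does not claim to. It is recalled in the introduction purely as motivation, with the remark that it is known for $d\le 7$ (Katz), $d\le 9$ (\cite{Nij95}, \cite{JK96}) and $d\le 11$ (\cite{Cot12}), and is open for general $d$. So there is no ``paper's own proof'' to compare against, and any complete proof you produced would be a major new theorem.

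Your proposal is a faithful and essentially correct \emph{description of the strategy} that Katz and his successors use --- the incidence variety $\Sigma_d$, the dimension count $\dim\Sigma_d=125=\dim V$ under the hypothesis $\h^1(\I_{C/\PP^4}(5))=0$, the deformation-theoretic criterion via $N_{C/Y}$ for existence and reducedness --- and your numerics ($124-5d$ for the fibre, $5d+1$ for the curves, $125$ total) are right. But it is not a proof, and the gap is exactly where you locate it yourself: the vanishing $\h^1(\I_{C/\PP^4}(5))=0$ must be \emph{established}, not assumed, on every component of $H_d$, and the stratification of the degenerate loci must actually be carried out. In Katz's range this is done via the Castelnuovo--Mumford regularity bounds of \cite{GLP83} (which is precisely the ingredient the present paper replaces by \cite{Loz09} in the biprojective setting), and already for $d=8,9$ the case analysis of special strata required substantial new work in \cite{Nij95} and \cite{JK96}. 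Since no uniform argument for all $d$ is known, a proposal that defers the regularity vanishing and the stratum-by-stratum dimension checks to ``to be verified'' has not proved the statement; it has restated the open problem. One smaller point: irreducibility of $\Sigma_d$ (or at least control of every component dominating $V$) is needed before ``$\h^0(N_{C/Y})=0$ at the generic point'' can be promoted to ``the scheme of curves on the general $Y$ is reduced and finite''; otherwise a non-dominating or excess-dimensional component could contribute non-reduced or positive-dimensional loci on the general quintic. This is the role of Lemma 1.4 of \cite{Kat86}, which the present paper also invokes in its Proposition \ref{PROP:Pa1Pa2}.
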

In \cite{Nij95},\cite{JK96}, following the main idea of its proof,  the authors were able to prove the conjecture in its stronger form for $d\leq 9$. Recently this has been improved: the conjecture is true for $d\leq 11$ (\cite{Cot12}). The approach of Katz is, roughly speaking, as follows. First, one constructs the space $\M_d$ of smooth rational curves in $\PP^4$ of degree $d$ forgetting about the quintic threefold. After that one takes the moduli space $\PP$ of smooth Calabi-Yau threefold in $\PP^4$ and the incidence correspondence $J_d$, i.e., all pairs $(C,Y)\in \M_d\times \PP$ such that $C\subset Y$ with the two projections on $\M_d$ and $\PP$. Then, the conjecture holds for $d$ if $J_d$ is irreducible of dimension equal to the dimension of $\PP$ and if there exists a curve of degree $d$ on the generic quintic threefold. The main ingredient in order to prove the finiteness of the space of curves on the generic quintic threefold are the results about the $6-$regularity (\cite{Mum66}) for curves of low degree proven in \cite{GLP83}.
\vspace{2mm}

\noindent With the same idea in mind, one can ask if the same result is true for other families of Calabi-Yau varieties. For example, consider the families of Calabi-Yau threefolds which are complete intersections in some projective space as done in \cite{JK03}. The work follows again the same idea: one defines an incidence correspondence and answers positively to the finiteness and existence conditions if the degree of the curves is low. The meaning of `low' has to be clarified: one investigates the case where the Calabi-Yau are nondegenerate complete intersections in $\PP^N$ and obtains $5$ families (respectively one in $\PP^4$ - the quintic threefold - two in $\PP^5$, two in $\PP^6$ and only one in $\PP^7$). In \cite{JK03} the authors prove that the generalized Clemens' conjecture holds respectively for $d\leq 7$, $d\leq 6$ and $d \leq 5$ if one considers non degenerate complete intersections in $\PP^5,\PP^6$ and $\PP^7$. Other interesting results on this subject can be found in \cite{Knu12} and \cite{Knu13}.
\vspace{2mm}

\noindent In order to approach the conjecture for a family of Calabi-Yau threefolds in an ambient space $X$, one has two main objectives: the existence of a curve of the right ``degree'' on the general Calabi-Yau threefold $Y$ on the family and the finiteness of such curves in $Y$. 
\vspace{2mm}

\noindent In this paper we investigate the finiteness problem for families of Complete Intersection Calabi-Yau (CICY in short) in the product $X$ of two projective spaces $\PP^{a_1}$ and $\PP^{a_2}$. To approach this case we need a result of regularity similar to the one proven in \cite{GLP83} which is known for curves in the product of two projective spaces (\cite{Loz09}). The Theorem in \cite{Loz09} holds only when $a_i\geq 2$ and only for curves which have a nondegenerate birational projection on the two factors of $X$. For this reason, instead of using the space $\M_{d_1,d_2}$ of smooth rational curves of bidegree $(d_1,d_2)$ in $X$, we use the subspace $\M_{d_1,d_2}'$ of curves which have a nondegenerate birational projection on both the factors and the associated incidence correspondence $J_{d_1,d_2}'$. To each general CICY $Y$ in $X$ we can associate a configuration matrix $A$, a $2$ by $m$ matrix ($m$ is the codimension of $Y$ in $X$) whose columns are the bidegrees of the hypersurfaces that cut $Y$. Our main result is Theorem \ref{THM:BIDEGREES} whose results are the analogue of the finiteness results in \cite{Kat86}, \cite{Nij95}, \cite{JK96}, and \cite{JK03}. It can be stated as follows:
\begin{thm*}
Let $X=\PP^{a_1}\times \PP^{a_2}$ and let $A$ be a matrix configuration of a nondegenerate CICY in $X$. Let us define
\begin{equation}
\begin{array}{l}
W_A= \{(d_1,d_2) \,|\, a_i\leq d_i\} \\
Z_A=\left\lbrace
\begin{array}{c}
(d_1,d_2)\,|\, \forall j=1,\dots, m\quad 
 \exists\, u,v \mbox{ with } u+v=1 \mbox{ such that}\\
a_1\leq d_1\leq a_1+b_{2j}-1+v \quad\mbox{ and }\quad a_2\leq d_2\leq a_2+b_{1j}-1+u
\end{array}
\right\rbrace.
\end{array}
\end{equation}
Then the following hold:
\begin{itemize}
\item the set $Z_A$ is not empty;
\item If $(d_1,d_2)\not\in W_A$, then $\M_{d_1,d_2}'$ is empty;
\item If $(d_1,d_2)\in Z_A$, then $J_{d_1,d_2}'$ is irreducible and the generic CICY in $U_A$ contains at most a finite number of elements of $\M_{d_1,d_2}'$.
\end{itemize}
\end{thm*}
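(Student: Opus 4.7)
Parts (1) and (2) admit short direct verifications. For (2), any $C\in\M'_{d_1,d_2}$ projects to a nondegenerate curve of degree $d_i$ in $\PP^{a_i}$, and since a nondegenerate curve must span a linear subspace of dimension at most its degree, we get $d_i\geq a_i$, so $(d_1,d_2)\in W_A$. For (1), the pair $(d_1,d_2)=(a_1,a_2)$ always lies in $Z_A$: every column of $A$ has at least one positive entry (otherwise the corresponding hypersurface is trivial), and taking $(u,v)=(1,0)$ or $(0,1)$ accordingly reduces the two required inequalities to tautologies.

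For (3), I would follow the Katz--Johnsen--Kleiman strategy and study the incidence
\[\M'_{d_1,d_2}\;\xleftarrow{\pi_1}\;J'_{d_1,d_2}\;\xrightarrow{\pi_2}\;U_A.\]
The base $\M'_{d_1,d_2}$ is irreducible of dimension $(a_1+1)(d_1+1)+(a_2+1)(d_2+1)-5$, via the standard description of bidegree $(d_1,d_2)$ morphisms $\PP^1\to X$ with nondegenerate birational projections on each factor, modulo $\PGL_2$ and the two overall rescalings. The fiber of $\pi_1$ over $[C]$ is an open subset (imposing smoothness of the intersection) of $\prod_{j=1}^{m}\PP H^0(X,\I_C\otimes\OO_X(b_{1j},b_{2j}))$, since specifying a CICY through $C$ amounts to choosing, for each column $j$, a hypersurface of bidegree $(b_{1j},b_{2j})$ vanishing on $C$.

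The crux is to control these linear systems via the regularity theorem of \cite{Loz09}, which bounds the Castelnuovo--Mumford regularity of such a curve in $\PP^{a_1}\times\PP^{a_2}$ in terms of $(d_1,d_2)$. The asymmetric inequalities defining $Z_A$ are tailored so that, for each $j$, the existence of $u,v$ with $u+v=1$ and
\[a_1\leq d_1\leq a_1+b_{2j}-1+v,\qquad a_2\leq d_2\leq a_2+b_{1j}-1+u\]
yields the vanishing $H^1(X,\I_C\otimes\OO_X(b_{1j},b_{2j}))=0$, i.e.\ surjectivity of $H^0(X,\OO_X(b_{1j},b_{2j}))\twoheadrightarrow H^0(C,\OO_X(b_{1j},b_{2j})|_C)$. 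Once this holds for every $j$, each fiber of $\pi_1$ becomes an open subset of a product of projective spaces of prescribed dimension, hence irreducible and equidimensional; combined with the irreducibility of $\M'_{d_1,d_2}$ this yields irreducibility of $J'_{d_1,d_2}$.

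Finiteness then reduces to a dimension count. Summing the fiber dimensions $h^0(X,\OO_X(b_{1j},b_{2j}))-(b_{1j}d_1+b_{2j}d_2)-2$ over $j$, adding $\dim\M'_{d_1,d_2}$, and using the Calabi--Yau relations $\sum_j b_{1j}=a_1+1$ and $\sum_j b_{2j}=a_2+1$ together with the threefold codimension identity $m=a_1+a_2-3$, everything cancels to give $\dim J'_{d_1,d_2}=\dim U_A$. Therefore $\pi_2$ has at most zero-dimensional generic fibers, and the generic CICY in $U_A$ contains only finitely many curves from $\M'_{d_1,d_2}$. The hard part will be checking that the asymmetric bounds in $Z_A$ are exactly sharp enough for \cite{Loz09} to apply simultaneously to every pair $(b_{1j},b_{2j})$: a strictly weaker interpretation of the regularity statement would force $Z_A$ to shrink beyond the announced size.
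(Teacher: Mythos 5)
Your proposal follows essentially the same route as the paper: the same verification that $(a_1,a_2)\in Z_A$, the same degree bound for nondegenerate projections, the same fiber description $p^{-1}(C)=\prod_j\PP(\H^0(\I_{C/X}(b_{1j},b_{2j})))$, the same use of the regularity theorem of \cite{Loz09} to force $\h^1(\I_{C/X}(b_{1j},b_{2j}))=0$ on $Z_A$, and the same dimension count giving $\dim J'_{d_1,d_2}=\dim U_A$. The one step you flag as ``the hard part'' is handled in the paper by combining the $(d_2-a_2+1,d_1-a_1+1)$-regularity from \cite{Loz09} with the persistence of multigraded regularity from \cite{MS04}, which is exactly what makes the asymmetric inequalities defining $Z_A$ the right ones.
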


One might wonder if it is possible to work with $\M_{d_1,d_2}$ instead of $\M_{d_1,d_2}'$ in order to obtain the finiteness result but, surprisingly enough, this is not the case. Indeed, for example, in \cite{Som00} is proved that the generic CICY of Tian-Yau type (see Remark \ref{REM:TY}) in $\PP^3\times \PP^3$ contains a positive dimensional familiy of rational curves for bidegrees $(3,3)$, whereas our Theorem says that the curves in $\M_{d_1,d_2}'$ on such general CICY are at most finite. These facts are not incompatible: the family of rational curves constructed in \cite{Som00} is a family of degenerate curves.
\vspace{2mm}

\noindent The main theorem is proved in Section \ref{SEC:MAINTHM}; in Section \ref{SEC:CICY} we briefly fix some notation and discuss how to obtain all the configuration matrices which give CICY's in $X=\PP^{a_1}\times \PP^{a_2}$ which are not equivalent to a family of CICY's in some $\PP^N$ (it is the same as to study complete intersections on $\PP^N$ which are nondegenerate). In Section \ref{SEC:MNONPRIM} we prove Theorem \ref{THM:Pa1Pa211} which gives some results on the case of highly degenerate curves:

\begin{thm*}
Let $X=\PP^{a_1}\times \PP^{a_2}$ and fix a configuration matrix $A$ of a CICY. If $(d_1,d_2)\in \{(0,1),(1,0),(1,1)\}$ then $J_{d_1,d_2}$ is irreducible and the generic Calabi-Yau threefold $Y$ in $X$ with matrix configuration $A$ contains at most a finite number of curves in $\M_{d_1,d_2}$.
\end{thm*}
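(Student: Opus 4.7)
\medskip
\textbf{Proof plan.} My strategy is a direct dimension count on the incidence correspondence $J_{d_1,d_2}$, exploiting the transitive action of $G:=\Aut(X)=\PGL(a_1+1)\times \PGL(a_2+1)$ on each of the three spaces $\M_{d_1,d_2}$. First I would describe $\M_{d_1,d_2}$ explicitly. Every smooth rational curve of bidegree $(0,1)$ in $X$ has the form $\{p\}\times L$ for a point $p\in\PP^{a_1}$ and a line $L\subset \PP^{a_2}$, so $\M_{0,1} \cong \PP^{a_1}\times \mathrm{Gr}(1,\PP^{a_2})$ is irreducible of dimension $a_1 + 2(a_2-1)$; symmetrically $\M_{1,0}$ is irreducible of dimension $2(a_1-1) + a_2$. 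A smooth rational curve of bidegree $(1,1)$ must project isomorphically onto a line $L_i\subset \PP^{a_i}$ in each factor, so $\M_{1,1}$ is a $\PGL_2$-bundle over $\mathrm{Gr}(1,\PP^{a_1})\times \mathrm{Gr}(1,\PP^{a_2})$ and is irreducible of dimension $2(a_1-1)+2(a_2-1)+3$. In all three cases $G$ acts transitively.

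Next I would fix a standard reference curve $C_0$ built from coordinate axes (for instance $\{[1:0:\dots:0]\}\times \{[s:t:0:\dots:0]\}$ for the bidegree $(0,1)$), and show for each column $(b_{1j},b_{2j})$ of the configuration matrix $A$ that the restriction
\[
H^0(X,\OO_X(b_{1j},b_{2j})) \longrightarrow H^0(C_0,\OO_{C_0}(b_{1j},b_{2j}))
\]
is surjective by exhibiting explicit monomial preimages. Since $C_0\cong \PP^1$ and the restricted line bundle has degree $b_{2j}$, $b_{1j}$, or $b_{1j}+b_{2j}$ respectively, setting $n_j := h^0(C_0,\OO_{C_0}(b_{1j},b_{2j}))$ we obtain $h^0(\I_{C_0/X}(b_{1j},b_{2j})) = h^0(X,\OO_X(b_{1j},b_{2j})) - n_j$. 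The incidence correspondence $J_{d_1,d_2}$ fibres over $\M_{d_1,d_2}$ with fiber over $C$ given by the projective linear system of CICY's with configuration $A$ containing $C$; by $G$-homogeneity all fibers have the same dimension, so $J_{d_1,d_2}$ is irreducible and
\[
\dim J_{d_1,d_2} - \dim U_A \;=\; \dim\M_{d_1,d_2} \;-\; \sum_{j=1}^m n_j.
\]

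Finally, the Calabi--Yau constraints $\sum_j b_{1j} = a_1+1$, $\sum_j b_{2j} = a_2+1$, together with $m=a_1+a_2-3$, yield in each of the three cases the sharp identity $\sum_j n_j = \dim\M_{d_1,d_2}$: for instance for $(0,1)$ one has $\sum_j n_j = \sum_j(b_{2j}+1) = (a_2+1)+m = a_1+2a_2-2$, matching $\dim\M_{0,1}$, and the computations for $(1,0)$ and $(1,1)$ are analogous. Thus $\dim J_{d_1,d_2} = \dim U_A$ and, since both varieties are irreducible, the second projection is either non-dominant or generically finite; either way the generic CICY with configuration $A$ contains at most finitely many curves of the prescribed bidegree.

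The only delicate step is the column-by-column surjectivity check, which must also be carried out when $b_{1j}=0$ or $b_{2j}=0$. In those edge cases the map reduces to restricting polynomials on $\PP^{a_i}$ to a line or evaluating at a point, both of which are clearly surjective; everything else (irreducibility, $G$-homogeneity, and the arithmetic identities) is essentially bookkeeping against the Calabi--Yau relations.
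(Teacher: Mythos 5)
Your proposal is correct, and its overall architecture -- forming the incidence correspondence $J_{d_1,d_2}$, computing the fibre dimension of $p$ over a curve $C$, and concluding that $\dim J_{d_1,d_2}=\dim U_A$ so that the second projection is either non-dominant or generically finite -- is exactly the paper's Proposition \ref{PROP:Pa1Pa2} combined with Lemmas \ref{LEM:H0ICX} and \ref{EQ:DIMFIB2}. Where you genuinely diverge is in the key cohomological input. The paper establishes $H^1(\I_{C/X}(b_{1j},b_{2j}))=0$ by an intrinsic d\'evissage (Lemma \ref{LEM:DEG}): since a curve of bidegree $(1,1)$ (resp.\ $(0,1)$, $(1,0)$) lies in a $\PP^1\times\PP^1$ (resp.\ point times line) cut out by hyperplanes, one passes through the exact sequence $0\to\I_{H/X}\to\I_{C/X}\to\I_{C/H}\to 0$ repeatedly until $C$ becomes a $(1,1)$-divisor on $\PP^1\times\PP^1$, where the vanishing is immediate. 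You instead use transitivity of $\Aut(X)$ on each $\M_{d_1,d_2}$ and verify surjectivity of the restriction map on a single coordinate representative by exhibiting monomial preimages; this is an equivalent statement (given $H^1(\OO_X(b_1,b_2))=0$ by K\"unneth) and your explicit checks go through, including the degenerate columns. Two small remarks: the ``sharp identity'' $\sum_j n_j=\dim\M_{d_1,d_2}$ is not special to low bidegree -- it is the general bookkeeping of Lemma \ref{EQ:DIMFIB2}, valid for all $(d_1,d_2)$, so the entire content of the theorem really sits in the surjectivity step; and the inference ``constant fibre dimension plus irreducible fibres implies $J$ irreducible'' deserves a citation (the paper invokes \cite{Kat86}, Lemma 1.4) rather than a bare assertion. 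The paper's route has the advantage of adapting verbatim to products of arbitrarily many projective spaces, as it notes in its final remark; yours buys a very concrete, computation-free-of-sheaf-theory verification at the cost of the homogeneity argument.
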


In the Appendix there is an exhaustive list of configuration matrices for $X=\nobreak\PP^{a_1}\times\nobreak \PP^{a_2}$, which are non degenerate and the list of all the bidegrees for each configuration matrix for which Theorem \ref{THM:BIDEGREES} holds. 
\vspace{2mm}

\noindent This research project was partially supported by FIRB 2012 "Moduli spaces and Applications".

\section{CICY's in $\PP^{a_1}\times \PP^{a_2}$ and configuration matrices}
\label{SEC:CICY}

A {\it Complete Intersection Calabi-Yau threefold} in a variety $X$, {\it CICY} in short, is a complete intersection of hypersurfaces of $X$. We are interested in CICY's in the product of two projective spaces $\PP^{a_1}$ and $\PP^{a_2}$, which will be denoted by $X:=\PP^{a_1}\times \PP^{a_2}$. More generally, if $X$ is the product of several projective spaces, possibly of different dimensions (call $a_i$ with $i=1\dots l$ the dimensions of the projective spaces), and  $Y$ is a CICY in $X$, the codimension of $Y$ in $X$ is given by 
$$m=\sum_{i=1}^{l}a_i-3.$$
The algebraic variety $Y$ it is described by the configuration matrix associated to $Y$, which is a matrix with $m$ columns and $l$ rows specifying the multidegrees of the hypersurfaces that cut $Y$. To be more precise, recall first that 
$$\Pic(X)=\prod_{i=1}^l\pi_i^*\Pic(\PP^{a_i})= \left\langle\left\lbrace\OO_X(\underline{e}_i)\, |\, i=1,\dots,l\right\rbrace\right\rangle,$$
where 
$$\OO_X(\underline{e}_i):=\pi_i^*\OO_{\PP^{a_i}}(i)$$
so that
$$\OO_X(\underline{b})=\OO_X(b_1,\dots,b_l):=\bigotimes_{i=1}^l\OO_X(\underline{e}_i)^{\otimes b_i}.$$
If $Y$ is the zero locus of $s_1,\dots, s_m$ with $s_j\in \H^0(\OO_X(\underline{b}_j))$, then the configuration matrix $A_Y$ associated to $Y$ is simply the matrix $A_Y=\left(b_{ij}\right)$. Often, an extra column on the left is added just to remember the dimension of the factors of $X$:
$$
\left(\begin{array}{c|ccc}
\PP^{a_1} & b_{11} & \cdots & b_{1m} \\
\vdots & \vdots & \ddots & \vdots \\
\PP^{a_l} & b_{l1} & \cdots & b_{lm}
\end{array}\right)
$$
By a simple computation, one sees that if $Y$ is a generic complete intersection of type specified by $A$ in $X=\prod_{i=1}^{l}\PP^{a_i}$ we have 
$$c_1(Y)=\sum_{i=1}^l\left(-a_i-1+\sum_{j=1}^m b_{1j}\right)c_1(\OO_X(\underline{e}_i));$$
so we require the sum of the elements on each row of the matrix to be equal to the dimension of the $\PP^n$ (associated to that row) plus $1$ in order to get a Calabi-Yau. Some configurations cannot give irreducible varieties like, for example,
$$
\left(\begin{array}{c|cc}
\PP^{1} & c & \underline{b}_1 \\
\PP^{a_2} & 0 & \underline{b}_2
\end{array}\right)
$$
which yields a reducible variety for any $c>1$. 
\vspace{2mm}

\noindent It is worth to observe that some configurations are equivalent to others. One can, for example, permute the column or the rows of $A$ but there are some nontrivial ones: namely
$$
\left(\begin{array}{c|cc}
\PP^{a_1} & 1 & \underline{b}_1 \\
\PP^{a_2} & 0 & \underline{b}_2
\end{array}\right)\sim 
\left(\begin{array}{c|c}
\PP^{a_1-1} & \underline{b}_1 \\
\PP^{a_2} & \underline{b}_2
\end{array}\right)
$$
as the column $(1,0)^T$ means a section of $\pi_1^*\OO_{\PP^{a_1}}(1)$ and thus it specifies a hyperplane in $\PP^{a_1}$. The configurations like that on the left are called {\it degenerate}. Several others relations make possible the reduction to simpler configurations. We don't report them but the interested reader can have an insight of them at \cite{GHL13}.
\vspace{2mm}


\noindent There is a way to bound the number of CICY's in the product of projective spaces. Denote by $p$ the number of $\PP^1$ in the decomposition of $X$ and by $s$ the numbers of projective spaces of dimension greater than $1$, so that $p+s=l$. If 
$$\alpha=\sum_{i=1}^l (a_i-1)$$
then (see, for example, \cite{GHL13}) any CICY of dimension $3$ is equivalent to one in a suitable $X=\Pi_i^l\PP^{a_i}$ with
\begin{equation}
\label{EQ:BOUND}
p\leq \alpha\leq 6\quad\mbox{ and }\quad s\leq 9.
\end{equation}

\noindent From now on, we assume to be in the case where $X$ is the product of two projective spaces of dimension $a_1$ and $a_2$, respectively. In this case, we have 
$$\alpha=a_1+a_2-2=\dim(X)-2=m+3-2=m+1$$
and the inequalities \ref{EQ:BOUND} gives us
$$p\leq m+1\leq 6.$$
Hence, the codimension of $Y$ can be assumed to be at most $5$. So, an admissible matrix of a CICY can be searched among the $2$ by $m$ matrices with natural entries such that the following hold:
\begin{itemize}
\item $m$ is between $1$ and $5$;
\item $a_1+a_2=3+m$;
\item the sum on the $i-$th row is equal to $a_i+1$;
\item the sum on the $i-$th column is greater than or equal to $2$;
\item if $c\underline{e}_i$ is a column of $A$ then $a_i>1$;
\end{itemize}
In our case, these information alone are sufficient to conclude that the number of such configuration is finite and to write down all of these matrices. For example, in codimension $1$, i.e. for the hypersurfaces case, we only have $2$ configurations, namely
$$\left(\begin{array}{c|c} 
\PP^{1} & {2} \\ 	
\PP^{3} & {4}
\end{array}\right)\quad \CICYu{3}{3}$$
whereas for codimension equal to $2$ we have a total of $11$ configurations, up to symmetries. In codimension $3,4$ and $5$ there are, $22, 14$ and $8$ different configurations respectively. Summing up, we have a total of $57$ configurations for non degenerate CICY in the product of two projective spaces. Nevertheless notice that some of these configurations yield the same family of Calabi-Yau threefolds. This fact is indeed useful (see Remark \ref{REM:ANCHEP1}). For the list of the $57$  configurations and a discussion about the configurations that give the same families, see Appendix \ref{SEC:APPLIST}.

\section{Rational curves in $\PP^{a_1}\times \PP^{a_2}$ of bidegree $(d_1,d_2)$}
\label{SEC:MAINTHM}

From now on, $C$ will be a curve on $X=\PP^{a_1}\times \PP^{a_2}$ with $a_i\geq 1$. Recall that we can define the {\it bidegree} of $C$ to be the pair $\underline{d}=(d_1,d_2)$ if and only if 
$$\deg(\OO_C(\underline{e}_i))=d_i.$$ 
Equivalently, if one defines $H_i$ to be a divisor such that $\OO_X(\underline{e}_i)=\OO_X(H_i)$,
$C$ has bidegree $(d_1,d_2)$ if and only if $C.H_i=d_i$.
\vspace{2mm}

\noindent We will be interested in the moduli space $\M_{\underline{d}}=\M_{d_1,d_2}$ of smooth rational curves of bidegree $(d_1,d_2)$ in $X$. If $C\in \M_{d_1,d_2}$ there exist
$$\alpha=(\alpha_0,\dots,\alpha_{a_1})\in \H^0(\OO_{\PP^1}(d_1))^{\oplus (a_1+1)}$$
$$\beta=(\beta_0,\dots,\beta_{a_2})\in \H^0(\OO_{\PP^1}(d_2))^{\oplus (a_2+1)}$$
such that 
$$
\xymatrix@R=1em{
\PP^1\ar[r] & C \ar@{^{(}->}[r]& \PP^{a_1}\times \PP^{a_2} \\
(t:u)\ar@{|->}[rr] & & (\alpha(t:u),\beta(t:u)) 
}$$
The choice of $\alpha$ and $\beta$ it is not unique: if we fix projective coordinates on $\PP^1$ they are defined up to multiplication by scalars so we have an element on 
$$\PP(\H^0(\OO_{\PP^1}(d_1))^{a_1+1})\times \PP(\H^0(\OO_{\PP^1}(d_2))^{a_2+1}).$$

In order to get rid of the choice of the coordinates on $\PP^1$ we can consider the natural action of $\PGL(2)$. We can identify $\M_{d_1,d_2}$ with an open and irreducible set in 
\begin{equation}
\left(\PP(\H^0(\OO_{\PP^1}(d_2))^{a_2+1})\times \PP(\H^0(\OO_{\PP^1}(d_2))^{a_2+1})\right)/\PGL(2).
\end{equation}
For example, in order to obtain really a curve of bidegree $(d_1,d_2)$  we need to discard all the maps $(\alpha,\beta)$ such that $\alpha\in (<f>\otimes H^0(\OO_{\PP^1}(d_1')))$ for some $f\in H^0(\OO_{\PP^1}(d_1''))$ with $d_1=d_1'+d_1''$ (and the same for $\beta$). This is indeed a closed set in $\PP(\H^0(\OO_{\PP^1}(d_1))^{a_1+1})\times \PP(\H^0(\OO_{\PP^1}(d_2))^{a_2+1})$ like the set that yield singular curves. In particular, we have that
\begin{multline}
\label{EQ:DIMM}
\dim(\M_{d_1,d_2})=((d_1+1)(a_1+1)-1)+((d_2+1)(a_2+1)-1)-3=\\
=d_1(a_1+1)+d_2(a_2+1)+(a_1+a_2-3)=d_1(a_1+1)+d_2(a_2+1)+m.
\end{multline}

\noindent Define $\M_{d_1,d_2}'$ to be the moduli space of smooth rational curves in $X=\PP^{a_1}\times \PP^{a_2}$ of bidegree $(d_1,d_2)$ with nondegenerate birational projections on the factors of $X$. The elements of $\M_{d_1,d_2}'$ are then curves in $\M_{d_1,d_2}$ such that $\pi_i|_{C}:C\rightarrow C_i:=\pi_i(C)\subset \PP^a_1$ is a birational morphism on the image and $C_i$ is a nondegenerate curve in $\PP^a_i$. The last condition is equivalent to require that $C$ does not lie in a $\PP^{a_1-1}\times \PP^{a_2}$ or in a $\PP^{a_1}\times \PP^{a_2-1}$ inside $X$. If $C\subset \PP^{a_1-1}\times \PP^{a_2}$ then, for each $(\alpha,\beta)$ whose image is $C$, we have $\alpha\in H^0(\OO_{\PP^1}(d_1))^{a_1+1}$ with $\{\alpha_j\}_{j=1}^{a_1+1}$ not indipendent, which is a closed condition if $a_1\leq d_1$. Conversely, if $\alpha$ and $\beta$ are two collections of indipendent homogenous forms, they live on an open set. This is enough to prove that having nondegenerate projections is an open condition. Asking $\pi_i|_C$ to be birational onto its image is also an open condition by semicontinuity so we have that $\M_{d_1,d_2}'$ is an open and irreducible set in $\M_{d_1,d_2}$.

\begin{lem}
\label{LEM:H0ICX} 
Let $C$ be a curve of bidegree $(d_1,d_2)$ on $X=\PP^{a_1}\times \PP^{a_2}$ and let $\LL=\OO_X(b_1,b_2)$ with $b_1,b_2\geq 0$. Then
$$\h^0(I_{C/X}(b_1,b_2))=\binom{a_1+b_1}{b_1}\binom{a_2+b_2}{b_2}-1-b_1d_1-b_2d_2+\h^1(\I_{C/X}(b_1,b_2))$$
\end{lem}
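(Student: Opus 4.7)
The plan is to apply the long exact sequence in cohomology to the short exact sequence of sheaves
\begin{equation*}
0\to \I_{C/X}(b_1,b_2)\to \OO_X(b_1,b_2)\to \OO_C(b_1,b_2)\to 0
\end{equation*}
and read off $\h^0(\I_{C/X}(b_1,b_2))$ from the relevant piece, after computing the other terms explicitly.

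First I would compute $\h^0(\OO_X(b_1,b_2))$. Since $X=\PP^{a_1}\times\PP^{a_2}$ and $\OO_X(b_1,b_2)=\pi_1^*\OO_{\PP^{a_1}}(b_1)\otimes \pi_2^*\OO_{\PP^{a_2}}(b_2)$, by the Künneth formula and the classical computation of the cohomology of line bundles on projective space,
\begin{equation*}
\h^0(\OO_X(b_1,b_2))=\h^0(\OO_{\PP^{a_1}}(b_1))\cdot \h^0(\OO_{\PP^{a_2}}(b_2))=\binom{a_1+b_1}{b_1}\binom{a_2+b_2}{b_2}.
\end{equation*}
In the same way (using $b_1,b_2\geq 0$, so that $\h^1(\OO_{\PP^{a_i}}(b_i))=0$), Künneth gives the vanishing
\begin{equation*}
\h^1(\OO_X(b_1,b_2))=0,
\end{equation*}
which is exactly what is needed to truncate the long exact sequence.

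Next I would compute $\h^0(\OO_C(b_1,b_2))$. Because $C$ is a smooth rational curve, we have an isomorphism $C\cong \PP^1$, and under this isomorphism $\OO_C(\underline{e}_i)$ is a line bundle of degree $d_i$ by definition of the bidegree. Therefore $\OO_C(b_1,b_2)\cong \OO_{\PP^1}(b_1d_1+b_2d_2)$, a line bundle of non-negative degree (since $b_i\geq 0$ and $d_i\geq 0$), hence
\begin{equation*}
\h^0(\OO_C(b_1,b_2))=b_1d_1+b_2d_2+1,\qquad \h^1(\OO_C(b_1,b_2))=0.
\end{equation*}

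Putting this together, the long exact sequence reduces to
\begin{equation*}
0\to \H^0(\I_{C/X}(b_1,b_2))\to \H^0(\OO_X(b_1,b_2))\to \H^0(\OO_C(b_1,b_2))\to \H^1(\I_{C/X}(b_1,b_2))\to 0,
\end{equation*}
and taking the alternating sum of dimensions yields the claimed identity
\begin{equation*}
\h^0(\I_{C/X}(b_1,b_2))=\binom{a_1+b_1}{b_1}\binom{a_2+b_2}{b_2}-1-b_1d_1-b_2d_2+\h^1(\I_{C/X}(b_1,b_2)).
\end{equation*}
There is no real obstacle: all the ingredients are standard once one notices that $b_1,b_2\geq 0$ guarantees $\h^1(\OO_X(b_1,b_2))=0$ and that rationality of $C$ makes $\h^0(\OO_C(b_1,b_2))$ trivial to compute. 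The only mild point of care is making sure the long exact sequence genuinely truncates, which is what the Künneth vanishing provides.
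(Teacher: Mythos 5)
Your proof is correct and follows essentially the same route as the paper: twist the ideal sheaf sequence of $C$ by $\OO_X(b_1,b_2)$, use the K\"unneth formula to get $\h^1(\OO_X(b_1,b_2))=0$ and truncate the long exact sequence, and compute $\h^0(\OO_C(b_1,b_2))$ from the fact that $\OO_C(b_1,b_2)$ has degree $b_1d_1+b_2d_2$ on the rational curve $C$. Nothing further is needed.
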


\begin{proof}
Start from the exact sequence of sheaves that defines $\OO_C$ and twist it by $\LL$ obtaining
$$0\rightarrow \I_{C/X}(b_1,b_2)\rightarrow \OO_X(b_1,b_2)\rightarrow \OO_C(b_1,b_2)\rightarrow 0$$
and its associated cohomology sequence
\begin{equation}
\renewcommand{\arraystretch}{1.35}
\begin{array}{l}
0\rightarrow \H^0(\I_{C/X}(b_1,b_2))\rightarrow \H^0(\OO_X(b_1,b_2))\rightarrow \H^0(\OO_C(b_1,b_2))\rightarrow \\
\phantom{0} \rightarrow \H^1(\I_{C/X}(b_1,b_2))\rightarrow \H^1(\OO_X(b_1,b_2)) 
\end{array}
\end{equation}
By K\"unnet formula we have 
$$\H^1(\OO_X(b_1,b_2))=\left(\H^0(\OO_{\PP^{a_1}}(b_1))\otimes \H^1(\OO_{\PP^{a_2}}(b_2))\right)\oplus \left(\H^1(\OO_{\PP^{a_1}}(b_1))\otimes \H^0(\OO_{\PP^{a_2}}(b_2))\right)$$
so $\H^1(\OO_X(b_1,b_2))=0$ because $\H^1(\OO_{\PP^{a_i}}(b_i))=0$ for $b_i\geq 0$.
The claim follows by observing that $\OO_C(b_1,b_2)$ has degree $b_1d_1+b_2d_2$ as a line bundle on the rational curve $C$ because, by hypotesis, $\OO_C(\underline{e}_i)$ has degree $d_i$.
\end{proof}

%

\section{CICY's in $\PP^{a_1}\times \PP^{a_2}$ and rational curves in them}

Consider the variety $X=\PP^{a_1}\times \PP^{a_2}$ and let $Y$ be a Calabi-Yau threefold in $X$ with matrix configuration $A_Y=A$. More precisely, if $Y$ is the zero locus of the sections $$s_j\in \H^0(\OO_X(b_{1j},b_{2j})),$$ where $j=1,\dots,m$. Then $A=(b_{ij})$ and we have $\sum_j b_{ij}=a_i+1$ to ensure that $c_1(Y)=0$.
Set
$$\PS_A:=\prod_{j=1}^m \PP(\H^0(\OO_X(b_{1j},b_{2j}))),$$
and denote by $\PP_A$ the moduli space of CICY's with matrix configuration $A$ in $X$. Let $U_A$ be the open set of $\PS_A$ that correspond to the choices of set of sections that yield smooth complete intersections in $X$. Thus, we have a map
$$\sigma:U_A\rightarrow \PP_A$$
which is a surjective morphism sending $\underline{s}$ to its zero locus $V(s_1,\dots, s_m)$.

The dimension of $U_A$ is 
\begin{equation}
\dim(U_A)=\dim(\PS_A)=\sum_{j=1}^m\left(\binom{a_1+b_{1j}}{a_1}\binom{a_2+b_{2j}}{a_2}-1\right).
\end{equation}

Denote by $J_{d_1,d_2}$ the incidence correspondence given by
$$J_{d_1,d_2}:=\{(C,\underline{s})\in \M_{d_1,d_2}\times U_A \,|\, C\subset V(s_1,\dots,s_m)=\sigma(\underline{s})\}$$
and by $p$ and $q$ the canonical projection on $\M_{d_1,d_2}$ and $U_A$ respectively. We will denote by $J_{d_1,d_2}'$ the preimage of $\M_{d_1,d_2}'$ under $p$. 

$$\xymatrix{
J_{d_1,d_2}'\ar@/^1pc/[rrd]^{q'}\ar[dd]_{p'}\ar@{^{(}->}[dr] \\
& J_{d_1,d_2} \ar[r]^q \ar[d]_{p} & U_A \\
\M_{d_1,d_2}'\ar@{^{(}->}[r] & \M_{d_1,d_2}
}$$

Set theoretically, the fiber of $p$ over $C$ is precisely the set of the pairs $(C,Y)$, where $Y=\sigma(\underline{s})$ is a Calabi-Yau threefold with matrix configuration $A$ containing $C$. This means that $s_j$ has to be in the ideal of $C$ in $X$ for all $j$. More precisely, we have $s_j\in \PP(\H^0(\I_{C/X}(b_{1j},b_{2j})))$ so that
\begin{equation}
\label{EQ:FIBPROJ}
p^{-1}(C)=\prod_{j=1}^m\PP(\H^0(\I_{C/X}(b_{1j},b_{2j}))).
\end{equation}

\begin{lem}
\label{EQ:DIMFIB2}
If $C\in \M_{d_1,d_2}$, the dimension of the fiber of $p$ over $C$ is given by 
$$\dim(p^{-1}(C))=\dim(U_A)-\dim(\M_{d_1,d_2})+\sum_{j=1}^m\h^1(\I_{C/X}(b_{1j},b_{2j})).$$
\end{lem}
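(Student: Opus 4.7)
The plan is to combine the explicit description of the fiber in \eqref{EQ:FIBPROJ} with Lemma \ref{LEM:H0ICX}, and then reconcile the combinatorial sums using the Calabi--Yau condition $\sum_j b_{ij} = a_i+1$.

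First, from \eqref{EQ:FIBPROJ} the fiber $p^{-1}(C)$ is a product of projective spaces, so
\[
\dim\bigl(p^{-1}(C)\bigr)=\sum_{j=1}^{m}\bigl(\h^{0}(\I_{C/X}(b_{1j},b_{2j}))-1\bigr).
\]
Next, I would apply Lemma \ref{LEM:H0ICX} termwise to rewrite each $\h^{0}(\I_{C/X}(b_{1j},b_{2j}))$ as
\[
\binom{a_1+b_{1j}}{a_1}\binom{a_2+b_{2j}}{a_2}-1-b_{1j}d_1-b_{2j}d_2+\h^{1}(\I_{C/X}(b_{1j},b_{2j})),
\]
so that summing over $j$ yields
\[
\dim\bigl(p^{-1}(C)\bigr)=\sum_{j=1}^{m}\binom{a_1+b_{1j}}{a_1}\binom{a_2+b_{2j}}{a_2}-2m-d_1\!\sum_{j=1}^{m} b_{1j}-d_2\!\sum_{j=1}^{m}b_{2j}+\sum_{j=1}^{m}\h^{1}(\I_{C/X}(b_{1j},b_{2j})).
\]

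The key simplification is the Calabi--Yau condition on the configuration matrix: the $i$-th row of $A$ sums to $a_i+1$, i.e.\ $\sum_j b_{ij}=a_i+1$. Substituting this and collecting terms, the first three contributions become
\[
\Bigl(\sum_{j=1}^{m}\binom{a_1+b_{1j}}{a_1}\binom{a_2+b_{2j}}{a_2}-m\Bigr)-\bigl(d_1(a_1+1)+d_2(a_2+1)+m\bigr).
\]
By the formula for $\dim(U_A)$ recalled just before the statement, the first parenthesis equals $\dim(U_A)$; by \eqref{EQ:DIMM}, the second equals $\dim(\M_{d_1,d_2})$. Hence
\[
\dim\bigl(p^{-1}(C)\bigr)=\dim(U_A)-\dim(\M_{d_1,d_2})+\sum_{j=1}^{m}\h^{1}(\I_{C/X}(b_{1j},b_{2j})),
\]
as claimed.

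There is no real obstacle here: the statement is a bookkeeping consequence of Lemma \ref{LEM:H0ICX}, the product structure of the fiber, and the Calabi--Yau row-sum condition. The only point to be a bit careful about is that Lemma \ref{LEM:H0ICX} requires $b_{1j},b_{2j}\geq 0$, which is automatic for entries of a configuration matrix, and that we use the identity $\binom{a_i+b_{ij}}{b_{ij}}=\binom{a_i+b_{ij}}{a_i}$ to match the form in which $\dim(U_A)$ was written.
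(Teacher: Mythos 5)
Your proof is correct and follows essentially the same route as the paper: both start from the product description of the fiber in \eqref{EQ:FIBPROJ}, apply Lemma \ref{LEM:H0ICX} termwise, and use the row-sum condition $\sum_j b_{ij}=a_i+1$ together with the formulas for $\dim(U_A)$ and $\dim(\M_{d_1,d_2})$ to regroup the terms. Nothing is missing; your remark about $\binom{a_i+b_{ij}}{b_{ij}}=\binom{a_i+b_{ij}}{a_i}$ is a minor bookkeeping point the paper glosses over.
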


\begin{proof}
By Equation \ref{EQ:FIBPROJ} we have
$$\dim(p^{-1}(C))=\sum_{j=1}^m\dim\left(\PP(\H^0(\I_{C/X}(b_{1j},b_{2j})))\right).$$
By Lemma \ref{LEM:H0ICX} we have
\begin{multline}
\dim(p^{-1}(C))=\\
\sum_{j=1}^m\left(\binom{a_1+b_{1j}}{b_{1j}}
=\binom{a_2+b_{2j}}{b_{2j}}-(1+b_{1j}d_1+b_{2j}d_2)+\h^1(\I_{C/X}(b_{1j},b_{2j}))-1\right)=\\
=\sum_{j=1}^m\left(\binom{a_1+b_{1j}}{b_{1j}}
\binom{a_2+b_{2j}}{b_{2j}}-1\right)+\sum_{j=1}^m(1+b_{1j}d_1+b_{2j}d_2)+\sum_{j=1}^m\h^1(\I_{C/X}(b_{1j},b_{2j}))=\\
=\dim(U_A)-(m+d_1(a_1+1)+d_2(a_2+1))+\sum_{j=1}^m\h^1(\I_{C/X}(b_{1j},b_{2j}))=\\
=\dim(U_A)-\dim(\M_{d_1,d_2})+\sum_{j=1}^m\h^1(\I_{C/X}(b_{1j},b_{2j}))
\end{multline}
as claimed.
\end{proof}

\begin{prop}
\label{PROP:Pa1Pa2}
Let $A=(b_{ij})$ be a matrix configuration that yields a CICY in $X=\PP^{a_1}\times \PP^{a_2}$. Assume that every curve in $\M_{d_1,d_2}$  (resp. every curve in $\M_{d_1,d_2}'$) is such that $\h^1(\I_{C/X}(b_{1j},b_{2j}))=0$ for all $j$. Then
\begin{itemize}
\item $J_{d_1,d_2}$ is irreducible (resp. $J_{d_1,d_2}'$);
\item for the generic Calabi-Yau threefold in $X$ with matrix configuration $A$, the set of curves (resp. the set of curves with nondegenerate birational projections) of bidegree $(d_1,d_2)$ contained in $Y$ is either finite or empty.
\end{itemize}
\end{prop}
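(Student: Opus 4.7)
The plan is to exploit the two projections $p$ and $q$ from $J_{d_1,d_2}$ (respectively $J_{d_1,d_2}'$) and to use the cohomological hypothesis as a uniform control on the fibers of $p$. Under the assumption $h^1(\I_{C/X}(b_{1j},b_{2j}))=0$ for every relevant $C$, Lemma \ref{EQ:DIMFIB2} says that the fiber of $p$ over every such $C$ has the same dimension
$$\dim(p^{-1}(C))=\dim(U_A)-\dim(\M_{d_1,d_2}),$$
and Equation \eqref{EQ:FIBPROJ} exhibits each such fiber as a product of projective spaces, hence as an irreducible variety.

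Since the base $\M_{d_1,d_2}$ (respectively $\M_{d_1,d_2}'$) is known to be irreducible and $p$ restricts to a surjection onto its image with irreducible, equidimensional fibers, a standard irreducibility lemma gives that $J_{d_1,d_2}$ (resp.\ $J_{d_1,d_2}'$) is irreducible; this settles the first bullet. Adding base and fiber dimensions yields
$$\dim(J_{d_1,d_2})=\dim(\M_{d_1,d_2})+\bigl(\dim(U_A)-\dim(\M_{d_1,d_2})\bigr)=\dim(U_A).$$

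For the second bullet, turn to the other projection $q:J_{d_1,d_2}\to U_A$. Source and target are now irreducible varieties of the same dimension, so exactly one of two things happens. If $q$ is not dominant, $\overline{q(J_{d_1,d_2})}$ is a proper closed subvariety of $U_A$ and the generic $\underline{s}\in U_A$ gives a CICY containing no curve of bidegree $(d_1,d_2)$. If instead $q$ is dominant, the theorem on the dimension of fibers of a dominant morphism forces the generic fiber to be $0$-dimensional, hence a finite set of points. The identical argument applied to $p'$ and $q'$ handles the primed variant.

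The only real technical ingredient is the equidimensionality of the fibers of $p$ over the whole base, which is exactly what the hypothesis $h^1(\I_{C/X}(b_{1j},b_{2j}))=0$ secures; without it the fiber dimension could jump over a closed subset and one would lose the crucial equality $\dim(J_{d_1,d_2})=\dim(U_A)$. After that, everything reduces to the standard irreducibility principle and a dimension count on the two projections, so no further obstacle is expected.
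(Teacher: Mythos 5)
Your proposal is correct and follows essentially the same route as the paper: constant fiber dimension of $p$ via Lemma \ref{EQ:DIMFIB2}, irreducibility of $J_{d_1,d_2}$ from the irreducible base and irreducible equidimensional fibers (the paper invokes \cite{Kat86}, Lemma 1.4, for this step, where you spell out that the fibers are products of projective spaces), and then the dominant/non-dominant dichotomy for $q$ between varieties of equal dimension.
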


\begin{proof}
Under the assumptions and by Lemma \ref{EQ:DIMFIB2}, we have that the fibers of $p$ have constant dimension given by 
$$\dim(p^{-1}(C))=\dim(U_A)-\dim(\M_{d_1,d_2}).$$
Hence, we deduce that  
$$\dim(J_{d_1,d_2})=\dim(\M_{d_1,d_2})+\dim(p^{-1}(C))=\dim(U_A).$$
Then, as in \cite{Kat86}[Lemma 1.4], we can conclude that $J_{d_1,d_2}$ is irreducible of dimension equal to $\dim(U_A)$. 
Since $q:J_{d_1,d_2}\rightarrow U_A$ is a morphism between two varieties with the same dimension, it is either dominant (and in this case its generic fiber has dimension $0$) or it has image contained in a proper subspace of $\PP$ (and in this case its generic fiber is empty). To conclude, it is enough to observe that the fiber over $Y\in U_A$ is exactly the set of rational curves of bidegree $(d_1,d_2)$ contained in $Y$. The case with $\M_{d_1,d_2}'$ is analogous.
\end{proof}

Given a configuration matrix $A=(b_{ij})$ for a CICY $Y$ in $X=\PP^{a_1}\times \PP^{a_2}$ (as usual, $m$ denotes the codimension of $Y$ in $X$ and is the number of columns of $A$), we will denote by $W_A$ and $Z_A$ the sets defined by 
\begin{equation}
\begin{array}{l}
W_A= \{(d_1,d_2) \,|\, a_i\leq d_i\} \\
Z_A=\left\lbrace
\begin{array}{c}
(d_1,d_2)\,|\, \forall j=1,\dots, m\quad 
 \exists u,v \mbox{ with } u+v=1 \mbox{ such that}\\
a_1\leq d_1\leq a_1+b_{2j}-1+v \quad\mbox{ and }\quad a_2\leq d_2\leq a_2+b_{1j}-1+u
\end{array}
\right\rbrace.
\end{array}
\end{equation}

\begin{thm}
\label{THM:BIDEGREES}
Let $X$ be $\PP^{a_1}\times \PP^{a_2}$ with $a_i\geq 2$ and consider a nondegenerate matrix configuration\footnote{Hence, it suffices to assume $a_1+a_2\leq 8$ and consider only the matrices in Appendix \ref{SEC:APPLIST}.} $A$ for a CICY in $X$. Then the following hold:
\begin{itemize}
\item the set $Z_A$ is not empty;
\item if $(d_1,d_2)\not\in W_A$, then $\M_{d_1,d_2}'$ is empty;
\item if $(d_1,d_2)\in Z_A$, then $J_{d_1,d_2}'$ is irreducible and the generic CICY in $U_A$ contains at most a finite number of elements of $\M_{d_1,d_2}'$.
\end{itemize}
\end{thm}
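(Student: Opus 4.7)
The proof splits into the three bullets; the first two are direct checks and the third reduces, via Proposition~\ref{PROP:Pa1Pa2}, to a cohomological vanishing supplied by the regularity theorem of \cite{Loz09}.

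For the second bullet, suppose $C\in\M_{d_1,d_2}'$. By definition of $\M_{d_1,d_2}'$ the projection $\pi_i|_C: C\to\pi_i(C)\subset\PP^{a_i}$ is birational onto a nondegenerate image, so $d_i = C.H_i = \deg\pi_i(C)$ and $\pi_i(C)$ is an irreducible nondegenerate curve in $\PP^{a_i}$. The minimum degree of such a curve is $a_i$ (attained by a rational normal curve), forcing $d_i\geq a_i$; contrapositively, if $(d_1,d_2)\notin W_A$ then $\M_{d_1,d_2}'$ is empty.

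For the first bullet, I claim $(a_1,a_2)\in Z_A$ always. Fix a column $(b_{1j},b_{2j})$ of $A$; since $A$ is nondegenerate and $a_i\geq 2$, the column satisfies either (i) $b_{1j},b_{2j}\geq 1$, or (ii) $b_{1j}=0$ and $b_{2j}\geq 2$, or (iii) $b_{2j}=0$ and $b_{1j}\geq 2$ (columns of the form $\underline{e}_i$ with coefficient $1$ are ruled out). In cases (i) and (ii) set $(u,v)=(1,0)$: then $a_1\leq a_1\leq a_1+b_{2j}-1$ holds since $b_{2j}\geq 1$, and $a_2\leq a_2\leq a_2+b_{1j}$ is trivial. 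In case (iii) set $(u,v)=(0,1)$ symmetrically. Hence $(a_1,a_2)\in Z_A$.

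For the third (main) bullet, I would apply Proposition~\ref{PROP:Pa1Pa2}: it suffices to prove that every $C\in\M_{d_1,d_2}'$ satisfies $\h^1(\I_{C/X}(b_{1j},b_{2j}))=0$ for every column $j$ of $A$. Since $a_1,a_2\geq 2$ and $C$ has nondegenerate birational projections, I would invoke the multi-graded regularity theorem for such curves in $\PP^{a_1}\times\PP^{a_2}$ established in \cite{Loz09} (the analogue of the $6$-regularity result of \cite{GLP83} used by Katz). The shape of $Z_A$, and in particular the cross-coupling between $d_1$ and $b_{2j}$, between $d_2$ and $b_{1j}$, together with the auxiliary parameters $u+v=1$, is tailored precisely to reproduce the hypotheses of Lozano's bound for each column bidegree. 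The main obstacle is the bookkeeping required to match the definition of $Z_A$ to the precise statement of \cite{Loz09} and to ensure that the regularity bound yields the vanishing simultaneously for all $j=1,\dots,m$; once this is done, irreducibility of $J_{d_1,d_2}'$ of dimension $\dim U_A$ and finiteness of the generic fibre of $q'$ follow immediately from Proposition~\ref{PROP:Pa1Pa2}.
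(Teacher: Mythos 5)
Your treatment of the first two bullets is correct and is essentially the paper's argument: $(a_1,a_2)\in Z_A$ because no column of a nondegenerate configuration has $b_{1j}=b_{2j}=0$ (so one of the two choices of $(u,v)$ always works), and a curve whose projection to $\PP^{a_i}$ is nondegenerate must satisfy $d_i\geq a_i$. The problem is in the third bullet, which carries all the content of the theorem. You correctly reduce, via Proposition~\ref{PROP:Pa1Pa2}, to showing that every $C\in\M_{d_1,d_2}'$ satisfies $\h^1(\I_{C/X}(b_{1j},b_{2j}))=0$ for every column $j$, but you then set aside the verification that $(d_1,d_2)\in Z_A$ forces this vanishing as ``bookkeeping to be done''. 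That verification is precisely where the definition of $Z_A$ comes from, and it must be exhibited. Concretely: for $C\in\M_{d_1,d_2}'$ the theorem of \cite{Loz09} gives that $C$ is $(d_2-a_2+1,\,d_1-a_1+1)$-regular (note the cross-indexing of the two factors), whence
$$\h^1\bigl(\I_{C/X}(d_2-a_2+1-u,\;d_1-a_1+1-v)\bigr)=0\qquad\mbox{for all } u,v\geq 0,\ u+v=1,$$
and therefore $\h^1(\I_{C/X}(b_{1j},b_{2j}))=0$ whenever $b_{1j}\geq d_2-a_2+1-u$ and $b_{2j}\geq d_1-a_1+1-v$ for some such pair $(u,v)$; these two inequalities rearrange exactly to $d_2\leq a_2+b_{1j}-1+u$ and $d_1\leq a_1+b_{2j}-1+v$, i.e.\ to the conditions defining $Z_A$.

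There is also a genuinely missing ingredient, not just an unfinished computation. Passing from the vanishing at the regularity corner to the vanishing at all componentwise larger bidegrees is not automatic in the multigraded setting: one needs that $(p,q)$-regularity implies $(p+n_1,q+n_2)$-regularity for all $(n_1,n_2)\in\N^2$. The paper invokes \cite{MS04} for exactly this propagation step. Your proposal never mentions it, and without it the deduction ``$b_{1j}$, $b_{2j}$ at least the shifted regularity bounds $\Rightarrow$ $\h^1=0$'' does not close, however carefully the bookkeeping against the statement of \cite{Loz09} is carried out.
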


\begin{proof}
To see that $Z_A$ is not empty, it is enough to observe that 
$(a_1,a_2)\in Z_A$ for all $A$ because $b_{ij}\geq 0$ and $b_{1j}$ and $b_{2j}$ cannot be both equal to $0$. The second claim is also easy: we have already observed that if either $a_1>d_1$ or $a_2>d_2$ then every curve of bidegree $(d_1,d_2)$ has to be degenerate: $\M_{d_1,d_2}'$ is empty.
In what follows, we may assume $a_i\leq d_i$ to ensure $\M_{d_1,d_2}'$ is not empty. The proof of the last claim follows from the fact that $(d_1,d_2)\in Z_A$ then 
\begin{equation}
\label{EQ:LOZ}
H^1(\I_{C/X}(b_{1j},b_{2j}))=0
\end{equation}
for all $j=1,\dots,m$: we can indeed conclude by Proposition \ref{PROP:Pa1Pa2}. To prove this, take $C\in \M_{d_1,d_2}'$ which is not empty by assumption. 
Consider a column $(b_{1j},b_{2j})$ of the configuration matrix. By \cite{Loz09} we have that $C$ is $(d_2-a_2+1,d_1-a_1+1)-$regular. Hence for all $u,v$ such that $u+v=1$ we have
$$H^1(\I_{C/X}(d_2-a_2+1-u,d_v-a_v+1-v))=0.$$
By \cite{MS04} we have also that $$H^1(\I_{C/X}(d_2-a_2+1-u+n_1,d_v-a_v+1-v+n_2))=0$$
for $(n_1,n_2)\in \N^2$ so if we ask
$$b_{1j}\geq d_2-a_2+1-u, b_{2j}\geq d_1-a_1+1-v$$
we have the wanted result: these inequalities, together with $a_i\leq d_i$, are exactly the ones defining $Z_A$.
\end{proof}

\begin{remark}
There are some cases for which $Z_A$ is reduced to the unique pair $(a_1,a_2)$, as well as cases for which $Z_A$ has more elements. The complete list of the pairs associated with the relative configuration matrix can be found in Appendix \ref{APP:TABLE}.
\end{remark}

\begin{remark}
\label{REM:ANCHEP1}
Note that Theorem \ref{THM:BIDEGREES} is only applicable for products $\PP^{a_1}\times \PP^{a_2}$ with $a_i\geq 2$. Nevertheless, because more configuration matrices represent the same family of CICY, one may be able to say something also in cases for which $a_1=1$. For example, the relation
$$\left(\begin{array}{c|c}
\PP^{1} & 2\underline{c}_1\\
\PP^{a_2} & \underline{c}_2
\end{array}\right)\sim \left(\begin{array}{c|cc}
\PP^{2} & 2 & \underline{c}_1 \\
\PP^{a_2} & 0 & \underline{c}_2 
\end{array}\right)$$
works well in this direction. See Appendix \ref{SEC:APPLIST} for major details.
\end{remark}

\begin{remark}
\label{REM:TY}
Here we focus on the case of non degenerate rational curves and one might wonder if the same result holds without this assumption, but this is not the case. Consider, for example, the generic CICY $Y$ in $\PP^3\times \PP^3$ with matrix configuration
$$A=\CICYt{3}{1}{0}{0}{1}{3},$$
the so called Tian-Yau CICY.
By Theorem \ref{THM:Pa1Pa211} we have that $Y$ contains at most a finite number of nondegenerate rational curves of bidegree $(3,3)$ but it can be shown (see \cite{Som00}) that $Y$ contains a positive dimensional family of degenerate curves of bidegree $(3,3)$.
\end{remark}


\section{Results on $\M_{d_1,d_2}$ for $d_i\leq 1$ for a CICY in $\PP^{a_1}\times \PP^{a_2}$}
\label{SEC:MNONPRIM}

In this section $X$ will denote $\PP^{a_1}\times \PP^{a_2}$. We will prove the following:

\begin{thm}
\label{THM:Pa1Pa211}
Let $X=\PP^{a_1}\times \PP^{a_2}$ and fix a configuration matrix $A$ of a CICY. If $(d_1,d_2)\in \{(0,1),(1,0),(1,1)\}$, then $J_{d_1,d_2}$ is irreducible and the generic Calabi-Yau threefold $Y$ in $X$ with matrix configuration $A$ contains at most a finite number curves of $\M_{d_1,d_2}$.
\end{thm}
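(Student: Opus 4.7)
The strategy is to apply Proposition~\ref{PROP:Pa1Pa2} to the moduli spaces $\M_{d_1,d_2}$, for which it suffices to verify that every curve $C\in\M_{d_1,d_2}$ satisfies $h^1(\I_{C/X}(b_{1j},b_{2j}))=0$ for every column of the configuration matrix $A$. From the cohomology sequence used in the proof of Lemma~\ref{LEM:H0ICX}, together with the vanishing $h^1(\OO_X(b_1,b_2))=0$ for $b_1,b_2\geq 0$, this vanishing is equivalent to the surjectivity of the restriction map
$$
\rho:\H^0(\OO_X(b_1,b_2))\longrightarrow \H^0(\OO_C(b_1,b_2))
$$
for all $b_1,b_2\geq 0$, where $(b_1,b_2)=(b_{1j},b_{2j})$.

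\textbf{Case $(d_1,d_2)=(1,0)$} (the case $(0,1)$ is symmetric). Every $C\in\M_{1,0}$ is of the form $C=\{p\}\times L$ with $p\in\PP^{a_1}$ and $L\subset\PP^{a_2}$ a line; in particular $\OO_C(b_1,b_2)\cong\OO_L(b_2)$. Given any $g\in\H^0(\OO_{\PP^{a_2}}(b_2))$, choose $f\in\H^0(\OO_{\PP^{a_1}}(b_1))$ with $f(p)\neq 0$; then $\rho(\pi_1^*f\otimes\pi_2^*g)=f(p)\cdot(g|_L)$. Since restriction of forms of any degree from $\PP^{a_2}$ to a line is surjective, $\rho$ is surjective. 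Moreover $\M_{1,0}\cong\PP^{a_1}\times\mathrm{Gr}(2,a_2+1)$ is irreducible.

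\textbf{Case $(d_1,d_2)=(1,1)$.} Any $C\in\M_{1,1}$ has linear, nondegenerate images $L_i=\pi_i(C)\subset\PP^{a_i}$ and is the graph of an isomorphism $L_1\to L_2$, so $C$ lies inside $L_1\times L_2\cong\PP^1\times\PP^1$ as a diagonal after a choice of coordinates. Factor $\rho$ as
$$
\H^0(\OO_X(b_1,b_2))\xrightarrow{\rho_1}\H^0(\OO_{L_1\times L_2}(b_1,b_2))\xrightarrow{\rho_2}\H^0(\OO_C(b_1,b_2)).
$$
The map $\rho_1$ is the tensor product of the two classical surjections $\H^0(\OO_{\PP^{a_i}}(b_i))\to\H^0(\OO_{L_i}(b_i))$ and is therefore surjective. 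For $\rho_2$, pick bihomogeneous coordinates $(x_0:x_1;y_0:y_1)$ so that $C=V(x_0y_1-x_1y_0)$; the monomial $x_0^ax_1^{b_1-a}y_0^cy_1^{b_2-c}$ restricts on $C\cong\PP^1$ to $s^{a+c}t^{b_1+b_2-a-c}$, and as $(a,c)$ ranges over $[0,b_1]\times[0,b_2]$ the sum $a+c$ exhausts $[0,b_1+b_2]$, so every monomial basis element of $\H^0(\OO_{\PP^1}(b_1+b_2))=\H^0(\OO_C(b_1,b_2))$ is hit. Hence $\rho_2$, and therefore $\rho$, is surjective. Irreducibility of $\M_{1,1}$ follows from its description in (\ref{EQ:DIMM}) as an open subset of a $\PGL(2)$-quotient of an irreducible variety.

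Combining the three cases with Proposition~\ref{PROP:Pa1Pa2} finishes the proof. The only case requiring any real thought is $(1,1)$, because $C$ projects nontrivially to both factors; the trick is to factor the restriction through the ``bounding box'' $L_1\times L_2\subset X$ so that one is reduced to an elementary monomial count on $\PP^1\times\PP^1$. The other two cases are immediate, since the curve is a fiber of one projection and the classical surjectivity of restriction from a projective space to a line takes care of the other factor.
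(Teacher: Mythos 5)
Your proposal is correct and its overall strategy coincides with the paper's: both reduce the theorem to the vanishing $h^1(\I_{C/X}(b_{1j},b_{2j}))=0$ for every column of $A$, invoke Proposition~\ref{PROP:Pa1Pa2}, and exploit the fact that a curve of bidegree $(1,1)$ lies in a product of lines $L_1\times L_2\cong\PP^1\times\PP^1$. The mechanism for the vanishing is genuinely different, though. The paper isolates Lemma~\ref{LEM:DEG}: if $C\subset H:=H_1\times\PP^{a_2}$ then $H^1(\I_{C/X}(b_1,b_2))\cong H^1(\I_{C/H}(b_1,b_2))$, proved from the sequence $0\to\I_{H/X}\to\I_{C/X}\to\I_{C/H}\to 0$ and the vanishing of $H^1$ and $H^2$ of $\OO_X(b_1-1,b_2)$; iterating over the $a_1-1$ and $a_2-1$ hyperplanes containing $C$ reduces everything to $H^1(\I_{C/\PP^1\times\PP^1}(b_1,b_2))=H^1(\OO_{\PP^1\times\PP^1}(b_1-1,b_2-1))=0$, since $C$ is a $(1,1)$-divisor there. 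You instead recast the vanishing as surjectivity of the restriction $\H^0(\OO_X(b_1,b_2))\to\H^0(\OO_C(b_1,b_2))$ (legitimate, since $\H^1(\OO_X(b_1,b_2))=0$) and verify it by hand, factoring through $L_1\times L_2$ and counting monomials on the diagonal. What the paper's route buys is painless generalization: as it remarks, Lemma~\ref{LEM:DEG} and the theorem extend verbatim to products of arbitrarily many projective spaces and any multidegree with all $d_i\leq 1$; your route is more self-contained and makes the geometry of the restriction map explicit, at the cost of a computation that would get more cumbersome in that generality. One cosmetic slip: calling the images $L_i=\pi_i(C)$ ``nondegenerate'' is not what you mean --- a line in $\PP^{a_i}$ with $a_i\geq 2$ is degenerate; what you need and use is only that each projection is an isomorphism onto a line, which holds because $\deg\OO_C(\underline{e}_i)=1$ rules out the image being a point.
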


We start with the following lemma
\begin{lem}
\label{LEM:DEG}
Let $C$ be a smooth subvariety of $X=\PP^{a_1}\times \PP^{a_2}$ and assume that there exists a hyperplane $H_1$ of $\PP^{a_1}$ such that $C\subset H_1\times \PP^{a_2}:=H$. Then $$H^1(\I_{C/X}(b_1,b_2))\simeq H^1(\I_{C/H}(b_1,b_2))$$
for any $b_1,b_2\geq 0$.
\end{lem}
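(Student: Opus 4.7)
The plan is to exploit the nested inclusion $C\subset H\subset X$ together with the fact that $H=H_1\times\PP^{a_2}$ is a divisor cut out by a single section of $\OO_X(1,0)$. This gives the standard short exact sequence of ideal sheaves
\begin{equation*}
0\to \I_{H/X}\to \I_{C/X}\to i_*\I_{C/H}\to 0,
\end{equation*}
where $i:H\hookrightarrow X$ is the closed immersion, and $\I_{H/X}\cong\OO_X(-1,0)$.

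First I would twist this sequence by $\OO_X(b_1,b_2)$. Since $\OO_X(b_1,b_2)|_H=\OO_H(b_1,b_2)$ (the two factors of the K\"unneth decomposition restrict compatibly), and since $i_*$ is exact, we obtain
\begin{equation*}
0\to \OO_X(b_1-1,b_2)\to \I_{C/X}(b_1,b_2)\to i_*\I_{C/H}(b_1,b_2)\to 0.
\end{equation*}
Passing to the long exact sequence in cohomology on $X$ (and recalling that $H^i(X,i_*\mathcal{F})=H^i(H,\mathcal{F})$ for a closed immersion) produces
\begin{equation*}
H^1(\OO_X(b_1-1,b_2))\to H^1(\I_{C/X}(b_1,b_2))\to H^1(\I_{C/H}(b_1,b_2))\to H^2(\OO_X(b_1-1,b_2)).
\end{equation*}

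Second, I would verify that both outer terms vanish. By the K\"unneth formula,
\begin{equation*}
H^i(X,\OO_X(b_1-1,b_2))=\bigoplus_{p+q=i}H^p(\PP^{a_1},\OO(b_1-1))\otimes H^q(\PP^{a_2},\OO(b_2)).
\end{equation*}
For $b_2\geq 0$ and $a_2\geq 1$, only $H^0(\PP^{a_2},\OO(b_2))$ is nonzero. For the first factor, distinguish two cases: if $b_1\geq 1$ then $b_1-1\geq 0$ so only $H^0$ survives; if $b_1=0$ then $H^i(\PP^{a_1},\OO(-1))=0$ for every $i$ (assuming $a_1\geq 1$). In either situation $H^i(\OO_X(b_1-1,b_2))=0$ for $i\geq 1$, and the connecting maps in the long exact sequence then force the desired isomorphism
\begin{equation*}
H^1(\I_{C/X}(b_1,b_2))\xrightarrow{\sim} H^1(\I_{C/H}(b_1,b_2)).
\end{equation*}

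There is no real obstacle here; the argument is essentially a bookkeeping exercise with the ideal-sheaf sequence of a divisor. The only mildly delicate point is the edge case $b_1=0$, which is handled by the observation that $\OO_{\PP^{a_1}}(-1)$ is acyclic on $\PP^{a_1}$. Throughout, the smoothness of $C$ plays no role in the cohomological computation (it is only used to make $\I_{C/X}$ behave nicely), and the hypothesis $b_1,b_2\geq 0$ is exactly what is needed to kill the extra K\"unneth summands.
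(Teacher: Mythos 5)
Your proof is correct and follows essentially the same route as the paper: the ideal-sheaf sequence $0\to \I_{H/X}\to \I_{C/X}\to \I_{C/H}\to 0$ with $\I_{H/X}\simeq\OO_X(-1,0)$, twisted by $\OO_X(b_1,b_2)$, followed by the vanishing of $H^1$ and $H^2$ of $\OO_X(b_1-1,b_2)$ via K\"unneth. You simply spell out the K\"unneth bookkeeping and the $b_1=0$ edge case more explicitly than the paper does.
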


\begin{proof}
Since $H_1$ is a hyperplane in $\PP^{a_1}$, we have $\I_{H/X}=\OO_X(-1,0)$. From 
$C\subset H\subset X$
we have the exact sequence
$$0\rightarrow\I_{H/X}\rightarrow\I_{C/X}\rightarrow \I_{C/H}\rightarrow 0,$$
which we twist with $\OO_X(b_1,b_2)$. We can conclude by observing that, for $p=1,2$, we have
$$H^p(\I_{H/X}(b_1,b_2))\simeq H^p(\OO_{X}(b_1-1,b_2))=0$$
as $b_1-1,b_2\geq -1$.
\end{proof}

\begin{proof}(of Theorem \ref{THM:Pa1Pa211})
We cannot use \cite{Loz09} because every curve $C$ in $\M_{d_1,d_2}$ has a degenerate birational projection. We will use instead the fact that $C$ is indeed very degenerate by showing "directly" that $H^1(\I_C/X(b_{1j},b_{2j}))=0$ for each column $(b_{1j},b_{2j})$ of $A$ and then concluding as in Proposition \ref{PROP:Pa1Pa2}. 
Consider the case $(d_1,d_2)=(1,1)$ (the other are similar). Let $C$ be a curve of bidegree $(1,1)$ in $X$. Then $C$ is the image of $\gamma: (t_0:t_1)\rightarrow (\alpha_1,\alpha_2)$ where $\alpha_i\in \H^0(\OO_{\PP^1}(1))^{a_i+1}$. If $\alpha_i=(\alpha_{i0},\dots ,\alpha_{ia_i})$ then it is clear that there exist $a_i-1$ indipendent linear relations between them (if there are $a_i$ linear indipendent relations, then the image of $\pi_i\circ\gamma$ is a point and thus $C$ cannot have bidegree $(1,1)$). In particular, there exist $a_i-1$ hyperplanes $H_{ik_i}$ ($k_i=1..(a_i-1)$) in $\PP^{a_i}$ such that $C$ is contained in $H_{1k_1}\times \PP^{a_2}$ and in $\PP^{a_1}\times H_{2k_2}$. By intersecting all these hypersurfaces we obtain a $\PP^1\times \PP^1$.
Using Lemma \ref{LEM:DEG} we obtain
$$\H^1(\I_{C/X}(b_{1j},b_{2j}))\simeq \H^1(\I_{C/\PP^1\times \PP^1}(b_{1j},b_{2j}))$$
for each $j=1,\dots,m$. The thesis follows immediately because $C$ is a divisor of bidegree $(1,1)$ on $\PP^1\times \PP^1$:
$$\H^1(\I_{C/\PP^1\times \PP^1}(b_{1j},b_{2j}))=\H^1(\OO_{\PP^1\times \PP^1}(b_{1j}-1,b_{2j}-1))=0.$$
\end{proof}

\begin{remark}
Both Lemma \ref{LEM:DEG} and Theorem \ref{THM:Pa1Pa211} can be stated for arbitrary products of projective spaces. Indeed, both proofs can be adapted easily to this case. Hence the generic (non degenerate) CICY in any product of projective spaces, without assumption on the type of its configuration matrix, contains at most a finite number of smooth rational curves of multidegrees $(d_1,\dots, d_r)$ with $d_i\leq 1$ (not all $0$).
\end{remark}


\appendix

\section{List of CICY's in $\PP^{a_1}\times \PP^{a_2}$}
\label{SEC:APPLIST}

Let $Y$ be a CICY in $X=\PP^{a_1}\times \PP^{a_2}$ and call $A$ its configuration matrix. Then either $Y$ is degenerate or it has a matrix configuration equivalent to one of the following $57$ matrices. The matrix configurations are divided by the numbers of columns (i.e., the codimension of the associated CICY in $X$) and are classified up to symmetries (and excluding the degenerate cases). Keep in mind that also with these restrictions, more configurations may yield the same family of CICY's. Indeed, it is always true that
$$\left(\begin{array}{c|c}
\PP^{1} & 2\underline{c}_1\\
\PP^{a_2} & \underline{c}_2
\end{array}\right)\sim \left(\begin{array}{c|cc}
\PP^{2} & 2 & \underline{c}_1 \\
\PP^{a_2} & 0 & \underline{c}_2 
\end{array}\right)$$
which tells us, for example, that
$$\left(\begin{array}{c|c}
\PP^{1} & {2}\\
\PP^{3} & {4}
\end{array}\right)\sim \left(\begin{array}{c|cc}
\PP^{2} & 2 & 1 \\
\PP^{3} & 0 & 4 
\end{array}\right).$$
The reason to treat them as separate cases is that the methods used in the main theorem may apply only for one of the descriptions (the example just presented is one of these cases). 
The matrices with a superscript are those, up to the author's knowledge, for which this happens (more precisely, in the following list, two matrices with the same superscript represent the same family of Calabi-Yau threefold).\vspace{2mm}

\noindent At last, the matrices with a $\bigstar$ are those for which the corresponding ambient space is a $\PP^1\times \PP^{a_2}$.
\vspace{2mm}

Codimension $1$: $2$ configurations.
$$\CICYu{2}{4}^{\mkern-9mu(I)}\quad \CICYu{3}{3}$$

Codimension $2$: $11$ configurations.
$$
\CICYd{1}{1}{1}{4}\quad
\CICYd{1}{1}{2}{3}\quad 
\CICYd{2}{0}{1}{4}^{\mkern-9mu(II)\mkern-18mu}\quad
\CICYd{2}{0}{2}{3}^{\mkern-9mu(III)\mkern-27mu}
$$
$$
\CICYd{2}{0}{3}{2}^{\mkern-9mu(IV)\mkern-18mu}\quad 
\CICYd{2}{1}{0}{4}^{\mkern-9mu(I)\mkern-9mu}\quad 
\CICYd{2}{1}{1}{3}\quad 
\CICYd{2}{1}{2}{2}
$$ 
$$\CICYd{2}{1}{3}{1}\quad \CICYd{3}{0}{1}{3}\quad \CICYd{3}{0}{2}{2}\quad$$

Codimension $3$: $22$ configurations.
$$\CICYt{1}{1}{0}{1}{1}{4}\quad \CICYt{1}{1}{0}{1}{2}{3}\quad
\CICYt{1}{1}{0}{1}{3}{2}\quad \CICYt{1}{1}{0}{2}{2}{2}$$
$$
\CICYt{2}{0}{0}{1}{2}{3}^{\mkern-9mu(V)\mkern-9mu}\quad
\CICYt{2}{0}{0}{2}{2}{2}^{\mkern-9mu(VI)\mkern-18mu}\quad 
\CICYt{2}{1}{0}{0}{1}{4}^{\mkern-9mu(II)\mkern-18mu}\quad 
\CICYt{2}{1}{0}{0}{2}{3}^{\mkern-9mu(III)\mkern-27mu}
$$
$$
\CICYt{2}{1}{0}{0}{3}{2}^{\mkern-9mu(IV)\mkern-18mu}\quad
\CICYt{1}{1}{1}{1}{3}{1}\quad
\CICYt{1}{1}{1}{2}{1}{2}\quad 
\CICYt{2}{1}{0}{1}{1}{3}
$$
$$\CICYt{2}{1}{0}{1}{2}{2}\quad \CICYt{2}{1}{0}{2}{1}{2}\quad
\CICYt{2}{1}{1}{0}{1}{3}\quad \CICYt{2}{1}{1}{1}{1}{2}$$
$$\CICYt{2}{1}{1}{2}{1}{1}\quad \CICYt{2}{2}{0}{1}{0}{3}\quad
\CICYt{2}{2}{0}{1}{1}{2}\quad \CICYt{2}{2}{0}{2}{0}{2}$$
$$\CICYt{3}{0}{0}{1}{2}{2}\quad \CICYt{3}{1}{0}{0}{1}{3}$$

Codimension $4$: $14$ configurations.
$$
\CICYq{1}{1}{0}{0}{1}{1}{2}{3}\quad
\CICYq{1}{1}{0}{0}{1}{2}{2}{2}\quad
\CICYq{2}{0}{0}{0}{1}{2}{2}{2}^{\mkern-9mu(VII)\mkern-27mu}\quad 
\CICYq{2}{1}{0}{0}{0}{1}{2}{3}^{\mkern-9mu(V)\mkern-9mu}
$$
$$
\CICYq{2}{1}{0}{0}{0}{2}{2}{2}^{\mkern-9mu(VI)\mkern-18mu} \quad 
\CICYq{2}{1}{0}{0}{1}{1}{2}{2} \quad 
\CICYq{1}{1}{1}{0}{1}{1}{2}{2} \quad
\CICYq{1}{1}{1}{0}{1}{1}{1}{3} 
$$
$$
\CICYq{1}{1}{1}{1}{2}{1}{1}{1}\quad 
\CICYq{2}{1}{1}{0}{0}{1}{1}{3}\quad
\CICYq{2}{1}{1}{0}{0}{1}{2}{2}\quad 
\CICYq{2}{1}{1}{0}{1}{1}{1}{2}
$$
$$\CICYq{2}{2}{0}{0}{0}{1}{2}{2}\quad \CICYq{3}{1}{0}{0}{0}{1}{2}{2}$$

Codimension $5$: $8$ configurations.
$$\mkern-9mu\bigstar\mkern-9mu\left(\begin{array}{c|ccccc}
\PP^{1} & {1} & {1} & {0} & {0} & {0}\\
\PP^{7} & {1} & {1} & {2} & {2} & {2}
\end{array}\right)\quad \left(\begin{array}{c|ccccc}
\PP^{2} & {2} & {1} & {0} & {0} & {0}\\
\PP^{6} & {0} & {1} & {2} & {2} & {2}
\end{array}\right)^{\mkern-9mu(VII)\mkern-27mu}\quad \left(\begin{array}{c|ccccc}
\PP^{3} & {1} & {1} & {1} & {1} & {0}\\
\PP^{5} & {1} & {1} & {1} & {1} & {2}
\end{array}\right)$$ $$\left(\begin{array}{c|ccccc}
\PP^{4} & {1} & {1} & {1} & {1} & {1}\\
\PP^{4} & {1} & {1} & {1} & {1} & {1}
\end{array}\right)\quad 
\left(\begin{array}{c|ccccc}
\PP^{2} & {1} & {1} & {1} & {0} & {0}\\
\PP^{6} & {1} & {1} & {1} & {2} & {2}
\end{array}\right)
\quad \left(\begin{array}{c|ccccc}
\PP^{3} & {2} & {1} & {1} & {0} & {0}\\
\PP^{5} & {0} & {1} & {1} & {2} & {2}
\end{array}\right)$$ $$\left(\begin{array}{c|ccccc}
\PP^{4} & {2} & {1} & {1} & {1} & {0}\\
\PP^{4} & {0} & {1} & {1} & {1} & {2}
\end{array}\right)\quad \left(\begin{array}{c|ccccc}
\PP^{4} & {2} & {2} & {1} & {0} & {0}\\
\PP^{4} & {0} & {0} & {1} & {2} & {2}
\end{array}\right)$$

\section{Bidegrees for which Theorem \ref{THM:BIDEGREES} holds}
\label{APP:TABLE}
In Theorem \ref{THM:BIDEGREES} we concluded that the set of rational curves of $\M_{d_1,d_2}'$ inside the generic CICY with matrix configuration $A$ in $\PP^{a_1}\times \PP^{a_2}$ is either finite or empty if $(d_1,d_2)\in Z_A$. We proved that $Z_A$ is not empty; Still it is worth checking for which values the Theorem holds. The following table displays $Z_A$ for all the configuration matrices $A$ with $a_1,a_2\geq 2$, such that $|Z_A|\geq 2$ and that have at least $2$ columns. 
\begin{small}
$$
\mkern-40mu\begin{array}{|c|c|c|c|}
\hline
m & X & A & Z_A \\
\hline\hline
2 &
\PP^{2}\times \PP^{3} &
\left(\begin{array}{cc}
2 & 1 \\
0 & 4
\end{array}\right) &
( 2, 3 ),
( 2, 4 )
\\ \hline
2 &
\PP^{2}\times \PP^{3} &
\left(\begin{array}{cc}
2 & 1 \\
1 & 3
\end{array}\right) &
\begin{array}{c}
(2,3),(2,4) \\
(3,3),(3,4)
\end{array}
\\ \hline
2 &
\PP^{2}\times \PP^{3} &
\left(\begin{array}{cc}
2 & 1 \\
2 & 2
\end{array}\right) &
\begin{array}{c}
(2,3), (2,4) \\ 
(3,4), (4,3) \\
(3,3)
\end{array}
\\ \hline
2 &
\PP^{2}\times \PP^{3} &
\left(\begin{array}{cc}
2 & 1 \\
3 & 1
\end{array}\right) &
\begin{array}{c}
( 2, 3 ),( 2, 4 ) \\
( 3, 3 )
\end{array}
\\ \hline
2 &
\PP^{2}\times \PP^{3} &
\left(\begin{array}{cc}
3 & 0 \\
1 & 3
\end{array}\right) &
( 2, 3 ),
( 3, 3 )
\\ \hline
2 &
\PP^{2}\times \PP^{3} &
\left(\begin{array}{cc}
3 & 0 \\
2 & 2
\end{array}\right) &
( 2, 3 ),
( 3, 3 )
\\ \hline\hline
3 &
\PP^{2}\times \PP^{4} &
\left(\begin{array}{ccc}
1 & 1 & 1 \\
1 & 3 & 1
\end{array}\right) &
\begin{array}{c}
( 2, 4 ), ( 2, 5 ) \\
( 3, 4 )
\end{array}
\\ \hline
3 &
\PP^{2}\times \PP^{4} &
\left(\begin{array}{ccc}
1 & 1 & 1 \\
2 & 1 & 2
\end{array}\right) &
\begin{array}{c}
( 2, 4 ), ( 2, 5 ) \\
( 3, 4 )
\end{array}
\\ \hline
3 &
\PP^{2}\times \PP^{4} &
\left(\begin{array}{ccc}
2 & 1 & 0 \\
1 & 1 & 3
\end{array}\right) &
( 2, 4 ),
( 3, 4 )
\\ \hline
3 &
\PP^{2}\times \PP^{4} &
\left(\begin{array}{ccc}
2 & 1 & 0 \\
1 & 2 & 2
\end{array}\right) &
( 2, 4 ),
( 3, 4 )
\\ \hline
3 &
\PP^{2}\times \PP^{4} &
\left(\begin{array}{ccc}
2 & 1 & 0 \\
2 & 1 & 2
\end{array}\right) &
( 2, 4 ),
( 3, 4 )
\\ \hline
3 &
\PP^{2}\times \PP^{4} &
\left(\begin{array}{ccc}
3 & 0 & 0 \\
1 & 2 & 2
\end{array}\right) &
( 2, 4 ),
( 3, 4 )
\\ \hline
\end{array}
\quad 
\begin{array}{|c|c|c|c|}
\hline
m & X & A & Z_A \\
\hline\hline
3 &
\PP^{3}\times \PP^{3} &
\left(\begin{array}{ccc}
2 & 1 & 1 \\
1 & 1 & 2
\end{array}\right) &
\begin{array}{c}
( 3, 3 ), ( 3, 4 ) \\
( 4, 3 )
\end{array}
\\ \hline
3 &
\PP^{3}\times \PP^{3} &
\left(\begin{array}{ccc}
2 & 1 & 1 \\
2 & 1 & 1
\end{array}\right) &
\begin{array}{c}
( 3, 3 ), ( 3, 4 ) \\
( 4, 3 )
\end{array}
\\ \hline
3 &
\PP^{3}\times \PP^{3} &
\left(\begin{array}{ccc}
2 & 2 & 0 \\
1 & 1 & 2
\end{array}\right) &
( 3, 3 ),
( 4, 3 )
\\ \hline
3 &
\PP^{3}\times \PP^{3} &
\left(\begin{array}{ccc}
2 & 1 & 1 \\
0 & 1 & 3
\end{array}\right) &
( 3, 3 ),
( 3, 4 )
\\ \hline\hline
4 &
\PP^{2}\times \PP^{5} &
\left(\begin{array}{cccc}
1 & 1 & 1 & 0 \\
1 & 1 & 1 & 3
\end{array}\right) &
( 2, 5 ),
( 3, 5 )
\\ \hline
4 &
\PP^{2}\times \PP^{5} &
\left(\begin{array}{cccc}
1 & 1 & 1 & 0 \\
1 & 1 & 2 & 2
\end{array}\right) &
( 2, 5 ),
( 3, 5 )
\\ \hline
4 &
\PP^{2}\times \PP^{5} &
\left(\begin{array}{cccc}
2 & 1 & 0 & 0 \\
1 & 1 & 2 & 2
\end{array}\right) &
( 2, 5 ),
( 3, 5 )
\\ \hline
4 &
\PP^{3}\times \PP^{4} &
\left(\begin{array}{cccc}
1 & 1 & 1 & 1 \\
2 & 1 & 1 & 1
\end{array}\right) &
\begin{array}{c}
( 3, 4 ), ( 4, 5 ) \\
( 4, 4 )
\end{array}
\\ \hline
4 &
\PP^{3}\times \PP^{4} &
\left(\begin{array}{cccc}
2 & 1 & 1 & 0 \\
1 & 1 & 1 & 2
\end{array}\right) &
( 3, 4 ),
( 4, 4 )
\\ \hline\hline
5 &
\PP^{2}\times \PP^{6} &
\left(\begin{array}{ccccc}
1 & 1 & 1 & 0 & 0 \\
1 & 1 & 1 & 2 & 2
\end{array}\right) &
( 2, 6 ),
( 3, 6 )
\\ \hline
5 &
\PP^{3}\times \PP^{5} &
\left(\begin{array}{ccccc}
1 & 1 & 1 & 1 & 0 \\
1 & 1 & 1 & 1 & 2
\end{array}\right) &
( 3, 5 ),
( 4, 5 )
\\ \hline
5 &
\PP^{4}\times \PP^{4} &
\left(\begin{array}{ccccc}
1 & 1 & 1 & 1 & 1 \\
1 & 1 & 1 & 1 & 1
\end{array}\right) &
\begin{array}{c}
( 4, 4 ), ( 4, 5 ) \\
( 5, 4 )
\end{array}
\\ \hline
\end{array}
$$

\end{small}

\noindent For all the matrices with $a_1,a_2\geq 2$ that don't appear in the table (they are $17$) we have either $Z_A=\{(a_1,a_2)\}$ (for $16$ of them, all of which have $m>1$) or $A$ is the only configuration matrix of a CICY in $\PP^2\times\PP^2$. For this configuration one has 
$$Z_{A_{\PP^2\times \PP^2}}=\{2\leq d_i\leq 5\}\setminus \{(5,5)\}.$$
They were not included in the table in order to keep it more readable.

\newpage

\end{document}